\theoremstyle{plain}
\newtheorem*{proposition}{Proposition}
\theoremstyle{definition}
\begin{document}
\title[Congruence lattices of SPS lattices]{Notes on planar semimodular lattices. VIII.\\ Congruence lattices of SPS lattices}
\author{G. Gr\"{a}tzer} 
\email{gratzer@me.com}
\urladdr{http://server.maths.umanitoba.ca/homepages/gratzer/}
\date{Aug. 31, 2019}
\subjclass{06D05}
\keywords{lattice, congruence, semimodular, planar, slim}

\begin{abstract}
In this note, 
I find a new property of the congruence lattice, $\Con L$,
of an SPS lattice $L$ (slim, planar, semimodular,
where ``slim'' is the absence of~$\mathsf M_3$ sublattices) with more than $2$ elements:
\emph{there are at least two dual atoms in $\Con L$}.
So the three-element chain 
cannot be represented as the congruence lattice
of an SPS lattice, supplementing a~result of G. Cz\'edli.
\end{abstract}

\maketitle

\section{Introduction}\label{S:intro}

My paper \cite{gG16}
investigates congruences of
fork extensions and in \cite[Theorem~5]{gG16} 
I prove that the congruence lattice, $\Con L$,
of an SPS lattice (a Slim Planar Semimodular lattice,
where ``slim''means that there is no $\SM 3$ sublattice)
has the following property:
\begin{enumeratei}
\item[(P1)] Every join-irreducible 
congruence has at most two join-irreducible covers 
(in the order of join-irreducible congruences).
\end{enumeratei}
G. Cz\'edli \cite[Theorem 1.1]{gC14} 
proves that the converse is false 
by exhibiting an eight-element 
distributive lattice, $\SD 8$, 
satisfying (P1), which 
cannot be represented
as the congruence lattice of an SPS lattice;
see also my paper \cite{gG15}.

In this note, I observe that
the congruence lattice of an SPS lattice $L$
with more than $2$ elements
has another property: 
\begin{enumeratei}
\item[(P2)] There are at least two dual atoms 
in $\Con L$.
\end{enumeratei}
So the three-element chain $\SC3$,
which satisfies (P1), cannot be represented
as the congruence lattice of~an SPS lattice,  
an example of minimal size.
Similarly, we can take any distributive lattice 
of more than two elements with a~single dual atom
to get a nonrepresentable lattice.

Note that G. Cz\'edli's eight-element example 
satisfies both (P1) and (P2).
To get a condition for representability, 
we have to go beyond these conditions.

For the basic concepts and notation,
see my paper \cite{gG16} or my books 
\cite{LTF} and \cite{CFL2}.

\section{Verification}\label{S:Verification}
In this section, we verify that 
the three-element chain $\SC3$ cannot be represented
as the congruence lattice of~an SPS lattice.

Following G.~Gr\"atzer and E.~Knapp~\cite{GKn07},
a planar semimodular lattice $L$ is \emph{rectangular},
if~the left boundary chain
has exactly one doubly-irreducible element, $c_l$,
and the right boundary chain  
has exactly one doubly-irreducible element, $c_r$,
and these elements are complementary.

Let us assume that $L$ is an SPS lattice 
with $\Con L \iso \SC3$.
The following statement is a contradiction.

\begin{proposition}\label{P:primes}
Let $L$ be an SPS lattice with more than $2$ elements. 
Then \lp P2\rp holds.
\end{proposition}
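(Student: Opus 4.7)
My plan is to exhibit two distinct dual atoms of $\Con L$. Since $\Con L$ is a finite distributive lattice, its dual atoms are in bijection with the maximal elements of the poset of its join-irreducibles; and the join-irreducibles of $\Con L$ are exactly the principal congruences $\con(a,b)$ for prime intervals $a\prec b$, with $\con(a_1,b_1)=\con(a_2,b_2)$ iff $[a_1,b_1]$ and $[a_2,b_2]$ are projective. So the goal reduces to producing two projectivity classes of prime intervals of $L$, each maximal in the projectivity order.

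If $L$ is a chain, then $|L|\ge 3$ forces $\Con L\iso \SC_2^{\,|L|-1}$, giving at least two dual atoms. Otherwise $L$ contains a $4$-cell and, by planarity, has distinct left- and right-boundary chains from $0$ to $1$. I would select two prime intervals near the top of $L$ of opposite ``slopes'' (NE vs.\ NW) in the planar embedding --- the top prime interval $[t_l,1]$ on the upper-left boundary and $[t_r,1]$ on the upper-right boundary, descending into the nearest $4$-cell if $1$ has a unique lower cover --- and consider the principal congruences $\Theta_l=\con(t_l,1)$ and $\Theta_r=\con(t_r,1)$. That $\Theta_l\ne \Theta_r$ should follow from the fact that in a slim planar semimodular lattice, prime perspectivity through a $4$-cell preserves the slope of a prime interval (Cz\'edli's trajectory analysis), so opposite-slope prime intervals lie in distinct projectivity classes.

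The main obstacle is showing that both $\Theta_l$ and $\Theta_r$ are maximal join-irreducibles of $\Con L$. I would argue that, because the chosen prime intervals sit at the top of $L$, any prime interval $[p,q]$ with $\Theta_i\le \con(p,q)$ would place $(t_i,1)\in \con(p,q)$, an identification at the very top of the lattice; in the slim semimodular setting, this forces a projectivity carrying $[p,q]$ back to $[t_i,1]$, giving $\con(p,q)=\Theta_i$. The careful execution of this maximality argument, handling the various ways a principal congruence can propagate up through an SPS lattice, will draw on the fork-extension/trajectory machinery of Cz\'edli and of \cite{gG16}.
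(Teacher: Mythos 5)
Your reduction of (P2) to exhibiting two \emph{maximal} join-irreducible congruences is legitimate (in a finite distributive lattice the dual atoms do correspond to the maximal join-irreducibles), and the chain case is fine, but both pillars of the main case are broken. The lemma you invoke for distinctness is false: projectivity does \emph{not} preserve the slope of a prime interval in an SPS lattice. Cz\'edli's trajectory theorem says a trajectory goes up and may then turn down \emph{once}, at its top edge, and at that turn the slope flips. Already in $S_7=\set{0,x,y,a,m,b,1}$ (with $0\prec x,y$; $x\prec a,m$; $y\prec m,b$; $a,m,b\prec 1$), the left-slope edge $[x,a]$ and the right-slope edge $[y,b]$ lie in one trajectory, $[x,a]\sim[m,1]\sim[y,b]$, hence generate the same congruence. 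Your particular edges $[t_l,1]$, $[t_r,1]$ do lie in distinct trajectories (any edge with top $1$ must be the top edge of its trajectory), but distinct trajectories do not yet give distinct congruences --- note that coatom edges can generate comparable congruences, e.g.\ $\mathrm{con}(m,1)<\mathrm{con}(a,1)$ in $S_7$ --- so distinctness still requires proof. Worse, the maximality of $\mathrm{con}(t_l,1)$ and $\mathrm{con}(t_r,1)$, which you defer to ``careful execution,'' is the entire content of the proposition on your route: a join-irreducible $\mathrm{con}(p,q)$ lies above $\mathrm{con}(t,1)$ whenever $q\not\le t$ and $p=q\wedge t$ (prime-perspectivity up), and ruling out that some such $[p,q]$ yields a strictly larger join-irreducible is exactly the hard point; ``an identification at the top forces a projectivity back'' is an assertion, not an argument. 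Finally, without first passing to a rectangular lattice, $1$ may have a unique lower cover and your ``descend into the nearest $4$-cell'' recipe is not well defined.

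The paper sidesteps all of this by not producing dual atoms as maximal join-irreducibles at all: a congruence with exactly two blocks is automatically a dual atom, and such congruences are exactly the prime ideals (with their complementary prime filters). After replacing $L$ by a congruence-preserving rectangular extension (Gr\"atzer--Knapp), one shows by induction along the Cz\'edli--Schmidt fork-insertion construction that $[0,c_l]$ and $[0,c_r]$ are distinct prime ideals; no trajectories, slopes, or maximality analysis is needed. If you wish to salvage your approach, you must (i) perform the rectangular reduction first, (ii) replace the slope argument by a correct proof that $\mathrm{con}(t_l,1)\ne\mathrm{con}(t_r,1)$, and (iii) actually carry out the maximality argument; as it stands the proof has genuine gaps at its two decisive steps.
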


\begin{proof}
By G. Gr\"atzer and E. Knapp \cite[Theorem 7]{GKn09}, 
we can assume that~$L$ is rectangular.
We prove the following stronger version 
of the Proposition for slim rectangular lattices:

\emph{Let $L$ be a slim rectangular lattice 
with $c_l, c_t \neq 1$.
Then $P_l = [0, c_l]$ and $P_r  = [0, c_r]$ 
are distinct prime ideals of $L$.}

By G.~Cz\'edli and E.\,T. Schmidt~\cite[Lemma 22]{CS12},
we can obtain $L$ from a~grid $D = \SC p \times \SC q$,
where $p,q \geq 2$,
with a sequence $D = L_1, L_2, \dots, L_n = L$
of slim rectangular lattices,
where we obtain $L_{i+1}$ from $L_{i}$ 
by inserting a~fork 
at the covering square 
$S^i = \set{o^i, a_l^i, a_r^i, t^i}$ 
for $i = 1, \dots, n-1$, using the standard notation
\cite[Section 3, and Figure 5]{gG16}. 

\begin{figure}[htb]
\centerline{\includegraphics[scale = .8]{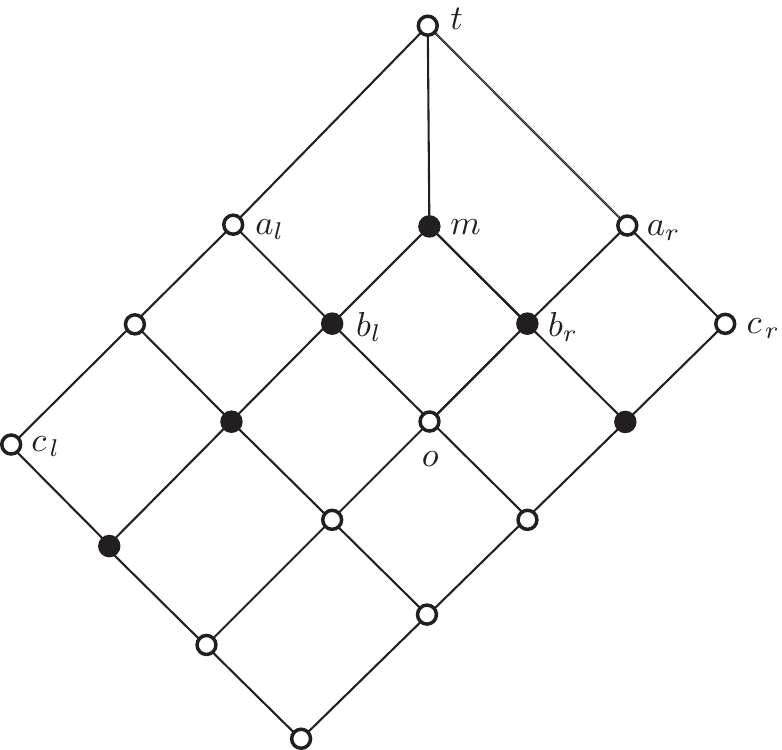}}
\caption{Fork insertion}
\label{F:fork}
\end{figure}

This statement holds for $i = 1$ by inspection,
see Figure~\ref{F:fork}. 
By~induction, assume that 
the statement holds for $i = n - 1$:
$P_l^i = [0, c_l^i]_{L_{n-1}}$ 
and $P_r^i = [0, c_r^i]_{L_{n-1}}$ 
are distinct prime ideals of $L_{n-1}$.
The induction step is the same as the induction base,
\emph{mutatis mutandis}.
\end{proof}

\end{document}